\newcounter{commentcounter}
\newtheorem{Prop}{Proposition}
\newtheorem{thm}[Prop]{Theorem}
\theoremstyle{definition}
\newtheorem{rem}[Prop]{Remark}
\newcommand{\CP}{\mathbb{CP}}
\newcommand{\RP}{\mathbb{RP}}
\newcommand{\Z}{\mathbb{Z}}
\DeclareMathOperator{\km}{km}
\begin{document}

\title{Topologically flat embedded 2-spheres in specific simply connected 4-manifolds}

\author{Daniel Kasprowski}
\address{Rheinische Friedrich-Wilhelms-Universit\"at Bonn, Mathematisches Institut,\newline\indent Endenicher Allee 60, 53115 Bonn, Germany}
\email{kasprowski@uni-bonn.de}
\author{Peter Lambert-Cole}
\address{School of Mathematics, Georgia Institute of Technology}
\email{plc@math.gatech.edu}
\author{Markus Land}
\address{Fakultät für Mathematik, Universität Regensburg, 93040 Regensburg, Germany}
\email{markus.land@mathematik.uni-regensburg.de}
\author{Ana G. Lecuona}
\address{School of Mathematics \& Statistics, University of Glasgow, University Place, \newline\indent Glasgow G12 8QQ}
\email{Ana.Lecuona@glasgow.ac.uk}

\begin{abstract}
	In this note we study whether specific elements in the second homology of specific simply connected closed $4$-manifolds can be represented by smooth or topologically flat embedded spheres. 
\end{abstract}

\maketitle
\section*{Introduction}
Let $X$ be a simply connected closed $4$-manifold and consider an element in its second homology group. It is well known that any such element can be represented by an embedded closed oriented surface. Finding the minimal genus among all such representing surfaces is an interesting task; see \cite{Lawson97} for a survey on this topic and \cite{FV14,Nouh14,Nagel16} for some recent results. The aim of this note is to discuss specific examples in which it is possible (or not) to push the genus down to be zero, i.e.\ where it is possible (or not) to represent specific elements by embedded spheres.
Notice that as $X$ is simply connected, any element in the second homology can be represented by a sphere and since every topological $4$-manifold is smoothable away from a point \cite[Corollary~2.2.3]{Quinn82}, we can assume the sphere to be regularly immersed. Hence the above question is equivalent to asking whether this regularly immersed sphere is homotopic to an embedded one.

To be more precise, in this note we consider the manifolds $M=8\CP^2 \# \overline{\CP^2}, M'=8\CP^2 \# \overline{\star\CP^2}$ and $\CP^2\#M$. The groups $H_2(M;\Z)$, $H_2(M';\Z)$ and $H_2(\CP^2\#M;\Z)$ will be considered with their `evident' bases; for example, for $H_2(M;\Z)$ the basis consists of 8 spheres of self-intersection 1, denoted $e_{1},\dots,e_{8}$, and a last sphere $e_{9}$ of self-intersection $-1$. Within these groups and with respect to the evident bases, we will be interested in the elements $x=(1,\ldots,1,3)\in H_2(M;\Z)$, $x'=(1,\ldots,1,3)\in H_2(M';\Z)$ and $(0,x)\in H_2(\CP^2\#M;\Z)$. The aim of this note is to show that $x$ cannot be represented by a topologically flat embedded sphere while the elements $x'$ and $(0,x)$ can be represented in such a way. At first we use the Kirby-Siebenmann invariant as an obstruction for $x$ and Freedman's classification of simply connected manifolds to prove the statement for $x'$ and $(0,x)$. In the second part of this paper, we reprove these statements using the Kervaire-Milnor invariant.

\section*{Existence of smooth or topologically flat embedded representatives}
 The intersection form of $M=8\CP^2 \# \overline{\CP^2}$ is given by 
\[\lambda_M = 8\langle 1 \rangle \oplus \langle -1 \rangle\]
so we observe the following:
\begin{enumerate}
\item $x=(1,\ldots,1,3)$ is a characteristic element for $\lambda_M$, i.e.\ $x\cdot y = y\cdot y$ mod 2 for all $y\in H_2(M;\Z)$, and $x\cdot x = -1$.
\item the orthogonal complement $\langle x \rangle^\perp$ of $\langle x\rangle$ is isomorphic to $E_8$: In fact the following elements give a basis of $\langle x \rangle^\perp$ whose representing matrix is the $E_8$-matrix: For $1\leq j\leq 7$ take $f_j = e_{j+1}-e_j$, and let $f_8 = e_9-e_6-e_7 - e_8$.
\end{enumerate}

\begin{rem}
	\label{rem:smoothobstruction}
	The element $x$ cannot be represented by a smoothly embedded sphere. Arguing by means of contradiction, we assume that $x$ can be represented by a smoothly embedded sphere. Then the normal bundle of such an embedding is a complex line bundle with Euler class $-1$. Its associated disk bundle is thus diffeomorphic to $\overline{\CP}^2$ with an open disk removed and its sphere bundle is $S^3$. Removing the interior of a tubular neighbourhood of the embedded $S^2$ one obtains a manifold with boundary $S^3$ to which we can glue $D^4$. This construction is called a ``blow-down'' since it is the inverse to a ``blow-up'', i.e.\ taking connected sum with $\overline{\CP}^2$. The intersection form of the resulting smooth (and simply connected) manifold $X$ is isometric to the orthogonal complement of $\langle x\rangle$, hence isometric to $E_8$. In particular, its intersection form is even, so that $X$ is a smooth spin manifold. By Rokhlin's theorem the signature cannot be $8$ and we obtained a contradiction.
\end{rem}
%

In contrast to the smooth case considered in the previous remark, a topological spin 4-manifold with signature $8$ exists by the work of Freedman. So one is led to ask whether $x$ can be represented by a topologically flat embedding of $S^2$. We answer this question to the negative.
\begin{thm}
	\label{thm:1}
	The element $x$ cannot be represented by a topologically flat embedding $S^2\to M$.
\end{thm}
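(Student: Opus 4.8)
The plan is to carry out the blow-down argument of Remark~\ref{rem:smoothobstruction} in the topological category, replacing the appeal to Rokhlin's theorem by the constraint on the Kirby–Siebenmann invariant that is part of Freedman's classification. So suppose, for contradiction, that $x$ is represented by a topologically flat embedding $S\colon S^2\hookrightarrow M$. By the theory of locally flat submanifolds of topological $4$-manifolds (Freedman–Quinn), $S$ has a tubular neighbourhood $\nu$; a rank-$2$ vector bundle over $S^2$ is determined by its Euler number, which here is the self-intersection $x\cdot x=-1$, so $\nu$ is homeomorphic to $\overline{\CP^2}$ with an open disk removed, and in particular $\partial\nu\cong S^3$. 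Form the topological blow-down
\[
X:=\bigl(M\setminus\mathrm{int}\,\nu\bigr)\cup_{S^3}D^4 ,
\]
a closed topological $4$-manifold.

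Next I would pin down the topology of $X$. Writing $W=M\setminus\mathrm{int}\,\nu$, van Kampen (applied to $M=W\cup_{S^3}\nu$ and then to $X=W\cup_{S^3}D^4$) gives $\pi_1(W)=1$ and hence $\pi_1(X)=1$. A Mayer–Vietoris computation for $M=W\cup\nu$ yields $H_2(W)\cong\langle x\rangle^\perp$ — here one uses that $x\cdot x=-1$ is a unit, so $\langle x\rangle$ is an orthogonal direct summand of $\lambda_M$ — and capping $W$ off with a $4$-ball does not change $H_2$, so $H_2(X)\cong\langle x\rangle^\perp$. Intersection numbers of classes carried by $W$ can be computed inside $W$, hence agree whether taken in $X$ or in $M$; therefore the intersection form of $X$ is the restriction of $\lambda_M$ to $\langle x\rangle^\perp$, which by observation (2) is isomorphic to $E_8$. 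Since $x$ is characteristic (observation (1)), $\langle x\rangle^\perp$ is even, so $X$ is a closed simply connected topological $4$-manifold with even intersection form $E_8$.

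Now comes the input from Freedman's classification: a closed simply connected topological $4$-manifold with even intersection form $\lambda$ has Kirby–Siebenmann invariant $\sigma(\lambda)/8\bmod 2$; for $\lambda=E_8$ this forces $\mathrm{ks}(X)=1$. On the other hand, the topological blow-up is inverse to the blow-down: performing the connected sum with $\overline{\CP^2}$ inside the $4$-ball glued into $X$ recovers exactly the neighbourhood $\nu$ (equivalently, one compares relative Kirby–Siebenmann invariants of $W$ across the two gluings along $S^3$). Either way, by additivity of the Kirby–Siebenmann invariant, $\mathrm{ks}(X)=\mathrm{ks}(X)+\mathrm{ks}(\overline{\CP^2})=\mathrm{ks}(X\#\overline{\CP^2})=\mathrm{ks}(M)=0$ because $M$ is smooth. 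This contradicts $\mathrm{ks}(X)=1$, which proves the theorem.

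The van Kampen and Mayer–Vietoris steps are routine. The substantive ingredients — all standard consequences of Freedman–Quinn theory — are: the existence of a topological tubular neighbourhood of a flat $S^2$ (so that the blow-down is defined and $\partial\nu\cong S^3$); the behaviour of the Kirby–Siebenmann invariant under the blow-down/blow-up construction; and the constraint $\mathrm{ks}=\sigma/8\bmod 2$ for even forms in Freedman's list. I expect the main point to keep straight is this last pair of facts: $E_8$ is even yet has signature $8$, so a topological blow-down of $x$ would be topologically spin with the "forbidden" signature, and this is exactly obstructed by the Kirby–Siebenmann invariant — the topological shadow of the Rokhlin obstruction used in the smooth case of Remark~\ref{rem:smoothobstruction}.
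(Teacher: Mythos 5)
Your proposal is correct and takes essentially the same route as the paper: a topological blow-down of the flatly embedded sphere yields a simply connected topological $4$-manifold with even intersection form $E_8$, and the Kirby--Siebenmann invariant (namely $KS\equiv\sigma/8\bmod 2$ for spin topological $4$-manifolds, additivity under connected sum, and $KS(M)=0$ since $M$ is smooth) gives the contradiction. The only cosmetic difference is that the paper packages the blow-down as a homeomorphism $M\cong E8\#\overline{\CP^2}$ via uniqueness of the $E8$-manifold, whereas you apply the $KS$-constraint directly to the blown-down manifold $X$.
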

\begin{proof}[Proof of \cref{thm:1}]
As in the smooth case, a topologically flat embedding would give rise to a simply connected topological manifold with intersection form $E_8$. This manifold exists and is unique up to homeomorphism by Freedman's classification of simply connected topological $4$-manifolds (\cite[Theorem~1.5]{freedman}). It is denoted by $E8$. More precisely, the topologically flat embedding would yield a homeomorphism $M\cong E8\# \overline{\CP^2}$. 
Recall that the Kirby--Siebenmann invariant $KS$ is an obstruction for topological manifolds to admit a smooth (in fact PL) structure. It is a bordism invariant of $4$-manifolds,  hence we have
\[KS(M\#N)=KS(M)+KS(N),\]
 for all $4$-manifolds $M$ and $N$.
As $E8$ is spin, we have $KS(E8)\equiv\sigma(E8)/8\mod2$ by \cite[Theorem 13.1]{KS}, where $\sigma(-)$ denotes the signature. As $\CP^2$ is smooth, we have $KS(\overline{\CP^2})=0$. Thus we find $KS(M)=0$ but $KS(E8\# \CP^2)=1$ so that these manifolds are not homeomorphic. It follows that $x$ cannot be represented by a topologically flat embedding of $S^2$.
\end{proof}

Let us consider the following variant, namely the manifold $M' = 8\CP^2 \# \overline{\star\CP^2}$,  where $\star \CP^2$ is a fake $\CP^2$, i.e.\ a manifold homotopy equivalent to $\CP^2$, but with non-trivial Kirby--Siebenmann invariant. The existence of such a manifold again makes use of Freedman's theorem. One nice way to construct $\star \CP^2$, see \cite[pg. 370]{freedman} where it is called the Chern manifold, is to consider the Poincar\'e 3-sphere, which by Freedman's theorem bounds a unique contractible 4-manifold. The trace of a $+1$-framed surgery on the trefoil knot produces a 4-manifold with boundary given by the Poincar\'e 3-sphere. Glueing together these two manifolds along their common boundary produces a simply connected 4-manifold with 
intersection form $\langle 1 \rangle$, so this manifold is homotopy equivalent to $\CP^2$. However, one can check that its Kirby--Siebenmann invariant is non-trivial.

The intersection form of $M'$ is thus also given by 
\[\lambda_{M'} = 8\langle 1 \rangle \oplus \langle -1 \rangle\]
and we consider again the element $x' = (1,\dots,1,3) \in H_2(M';\Z)$.
\begin{thm}
	\label{thm:2}
	The element $x'$ can be represented by a topologically flat embedding $S^2\to M'$.
\end{thm}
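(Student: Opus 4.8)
The plan is to mirror the topological blow-down argument from \cref{thm:1}, but to observe that this time Freedman's classification identifies $M'$ with the manifold $E8 \# \overline{\CP^2}$, in which a flat representative is manifest. First I would record the two relevant invariants of the candidate manifold $E8 \# \overline{\CP^2}$. Its intersection form is $E_8 \oplus \langle -1\rangle$, which is odd (because of the $\langle -1\rangle$ summand) and indefinite of rank $9$ and signature $7$; by the classification of indefinite unimodular symmetric bilinear forms it is therefore isometric to $8\langle 1\rangle \oplus \langle -1\rangle = \lambda_{M'}$. For the Kirby--Siebenmann invariant, additivity gives
$KS(E8 \# \overline{\CP^2}) = KS(E8) + KS(\overline{\CP^2}) = 1 + 0 = 1$,
using $KS(E8) \equiv \sigma(E8)/8 \bmod 2$ as in the proof of \cref{thm:1} together with $KS(\overline{\CP^2}) = 0$, while $KS(M') = 8\cdot KS(\CP^2) + KS(\overline{\star\CP^2}) = 0 + 1 = 1$ since $\star\CP^2$ has nontrivial Kirby--Siebenmann invariant by construction and $KS$ does not depend on the orientation. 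Hence $M'$ and $E8 \# \overline{\CP^2}$ have isometric intersection forms and equal Kirby--Siebenmann invariants, so Freedman's classification \cite[Theorem~1.5]{freedman} furnishes a homeomorphism $h\colon M' \xrightarrow{\ \cong\ } E8 \# \overline{\CP^2}$.

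Next I would exhibit in $E8 \# \overline{\CP^2}$ an obvious flat representative of the relevant class. Let $c \in H_2(E8 \# \overline{\CP^2};\Z)$ be the generator of the $\overline{\CP^2}$ summand, represented by the image of $\overline{\CP^1}\subset\overline{\CP^2}$ pushed off the connected-sum ball. Since $\overline{\CP^2}$ minus an open ball sits smoothly inside $E8 \# \overline{\CP^2}$, this is a smoothly embedded, hence topologically flat, sphere, and it satisfies $c\cdot c = -1$ and $c^\perp \cong E_8$.

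The remaining step, and the one requiring some care, is matching homology classes. The class $c' := h_*(x')$ satisfies $c'\cdot c' = x'\cdot x' = -1$ and $(c')^\perp \cong (x')^\perp \cong E_8$, exactly as $c$ does. Because $c'\cdot c' = -1$ is a unit, we get an orthogonal splitting $H_2(E8 \# \overline{\CP^2};\Z) = \langle c'\rangle \perp (c')^\perp$, and likewise for $c$; choosing any isometry $(c')^\perp \to c^\perp$ (both are $E_8$) and declaring $c' \mapsto c$ then defines an isometry $\phi$ of the intersection form of $E8 \# \overline{\CP^2}$. By Freedman's theorem this isometry is realized by a self-homeomorphism $g$ of $E8 \# \overline{\CP^2}$, so that $g\circ h\colon M'\to E8 \# \overline{\CP^2}$ is a homeomorphism carrying $x'$ to $c$. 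Pulling the flat sphere representing $c$ back along $g\circ h$ then yields a topologically flat embedded sphere in $M'$ representing $x'$.

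I expect the main obstacle to be precisely this last point: one needs not merely the homeomorphism classification, but the stronger fact (also due to Freedman, with the isotopy refinement due to Quinn) that every isometry of the intersection form of a simply connected closed topological $4$-manifold is induced by a self-homeomorphism. Everything else is routine: the identification of the two intersection forms via the classification of indefinite unimodular forms, the Kirby--Siebenmann bookkeeping, and the flatness of $\overline{\CP^1}\subset\overline{\CP^2}$ inside the connected sum.
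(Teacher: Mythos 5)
Your proposal is correct and follows essentially the same route as the paper: identify $\lambda_{M'}$ with $E_8\oplus\langle-1\rangle$ carrying $x'$ to a $(-1)$-class, match Kirby--Siebenmann invariants, invoke Freedman's classification together with its addendum on realizing isometries by homeomorphisms, and pull back the smoothly embedded $\overline{\CP^1}\subset\overline{\CP^2}$. The only difference is organizational: the paper obtains a single homeomorphism $M'\cong E8\#\overline{\CP^2}$ sending $x'$ to the generator directly, while you first take any homeomorphism and then compose with a self-homeomorphism realizing the class-matching isometry.
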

\begin{proof}
Since there is an isomorphism of forms $8\langle1\rangle\oplus\langle-1\rangle\cong E8\oplus\langle-1\rangle$ sending $x'=(1,\ldots,1,3)$ to $(0,1)$ and $KS(8\CP^2\#\star\overline{\CP}^2)=KS(E8\#\overline{\CP}^2)=1$, there is an homeomorphism $8\CP^2\#\star\overline{\CP}^2\cong E8\#\overline{\CP}^2$ which sends $x'$ to a generator of $H_2(\overline{\CP}^2;\Z)$ by Freedman's classification of simply connected topological manifolds (\cite[Theorem~1.5 and its addendum]{freedman}). The theorem now follows from the fact that the generator of $H_2(\overline{\CP}^2;\Z)$ can be represented by a smoothly embedded sphere.
\end{proof}

Finally, we consider $\CP^2\# M = 9\CP^2\#\overline{\CP^2}$ and the element $(0,x)=(0,1,\ldots,1,3)\in H_2(\CP^2\# M;\Z)$.
\begin{thm}
	\label{thm:3}
	The element $(0,x)$ can be represented by a topologically flat embedding $S^2\to \CP^2\#M$, but not by a smooth embedding.
\end{thm}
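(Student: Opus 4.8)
The plan is to treat the two assertions separately, using the strategy of \cref{rem:smoothobstruction} together with Donaldson's diagonalisation theorem for the non-existence of a smooth representative, and the strategy of \cref{thm:2} for the existence of a flat one. First I would record the lattice data. Writing $H_2(\CP^2\#M;\Z)\cong\langle1\rangle\oplus\lambda_M$, the class $(0,x)$ satisfies $(0,x)\cdot(0,x)=x\cdot x=-1$, so it spans a $\langle-1\rangle$-summand and $H_2(\CP^2\#M;\Z)\cong\langle(0,x)\rangle^\perp\oplus\langle(0,x)\rangle$. Using the basis of $\langle x\rangle^\perp$ recalled before \cref{rem:smoothobstruction}, the complement $\langle(0,x)\rangle^\perp$ is isometric to $\langle1\rangle\oplus E_8$; and since $\sigma(\CP^2\#M)=8$ while $(0,x)\cdot(0,x)=-1$, this copy of $E_8$ is positive definite, so $\langle(0,x)\rangle^\perp$ is positive definite of rank $9$. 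The point to keep in mind is that, unlike $\langle x\rangle^\perp\cong E_8$ inside $H_2(M;\Z)$, the form $\langle1\rangle\oplus E_8$ is \emph{odd}.

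For the non-existence of a smooth representative I would argue as in \cref{rem:smoothobstruction}. Assume, for a contradiction, that $(0,x)$ is represented by a smoothly embedded sphere $S$. Its normal bundle is then the complex line bundle of Euler number $-1$, whose sphere bundle is $S^3$, so blowing $S$ down produces a smooth closed simply connected $4$-manifold $Y$ with intersection form $\langle(0,x)\rangle^\perp\cong\langle1\rangle\oplus E_8$. This form is positive definite but is not isomorphic to the diagonal form $9\langle1\rangle$ --- for example because $E_8$ is even, so $\langle1\rangle\oplus E_8$ has only two vectors of square $1$ rather than $18$ --- contradicting Donaldson's theorem.

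For the existence of a flat representative I would mimic the proof of \cref{thm:2}. Since $\langle1\rangle\oplus E_8$ is odd, Freedman's classification provides a simply connected closed topological $4$-manifold $Z$ with this intersection form and with $KS(Z)=0$ (for an odd form both values of $KS$ occur). Then $Z\#\overline{\CP^2}$ has intersection form $(\langle1\rangle\oplus E_8)\oplus\langle-1\rangle$, which is odd and indefinite of rank $10$ and signature $8$, hence isometric to $9\langle1\rangle\oplus\langle-1\rangle$, and $KS(Z\#\overline{\CP^2})=KS(Z)=0=KS(\CP^2\#M)$. By Freedman's classification of simply connected topological $4$-manifolds and its addendum \cite[Theorem~1.5 and its addendum]{freedman} there is a homeomorphism $\CP^2\#M\cong Z\#\overline{\CP^2}$ realising the isometry of intersection forms that sends $(0,x)$, which generates the $\langle-1\rangle$-summand isolated above, to the generator of $H_2(\overline{\CP^2};\Z)$. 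The latter generator is represented by the smoothly, hence topologically flatly, embedded sphere $\overline{\CP^1}\subset\overline{\CP^2}\subset Z\#\overline{\CP^2}$, and its preimage under the homeomorphism is a topologically flat embedded representative of $(0,x)$.

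The main obstacle here is conceptual rather than technical: one has to notice that passing from $M$ to $\CP^2\#M$ replaces the relevant blown-down form, the even form $E_8$ --- which forces $KS=1$ and is exactly what obstructs a flat representative in \cref{thm:1} --- by the odd form $\langle1\rangle\oplus E_8$, for which the value $KS=0$ is available and makes the Freedman argument succeed. The rest is routine lattice bookkeeping and a direct transcription of the proofs of \cref{rem:smoothobstruction} and \cref{thm:2}.
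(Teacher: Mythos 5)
Your proposal is correct and follows essentially the same route as the paper: the smooth non-existence is exactly the blow-down plus Donaldson argument of \cref{rem:smoothobstruction} that the paper invokes, and the topologically flat representative is produced by the same Freedman classification-with-addendum technique used to prove \cref{thm:2}. The only cosmetic difference is that instead of reducing to \cref{thm:2} via a homeomorphism $\CP^2\#M\cong\star\CP^2\#M'$, you run that argument directly on $\CP^2\#M$, exhibiting it as $Z\#\overline{\CP^2}$ with $KS(Z)=0$ and intersection form $\langle1\rangle\oplus E_8$ (one may take $Z=\star\CP^2\#E8$, so your target coincides with the paper's intermediate manifold).
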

\begin{proof}
	Again by Freedman's classification of simply connected topological manifolds, there is a homeomorphism $\CP^2\#M\cong \star\CP^2\# M'$ sending $(0,x)$ to $(0,x')$ and we know by \cref{thm:2} that $(0,x')$ can be represented by a topologically flat embedding.
	
	As in \cref{rem:smoothobstruction}, if $x$ were represented by a smooth embedding, there would be a smooth manifold with intersection form $E_8\oplus\langle1\rangle$. Since $E_8\oplus\langle1\rangle$ is definite but not diagonalizable, no compact, smooth, simply connected, orientable $4$-manifold with this intersection form exists by Donaldson's theorem (\cite[Theorem~1]{donaldson}).
\end{proof}
\section*{The Kervaire--Milnor invariant}
The topological part of the above theorems can also be shown using the Kervaire--Milnor invariant $\km$, introduced by Freedman and Quinn \cite[Definition~10.8A]{FQ}. For a complete treatment of the Kervaire--Milnor invariant we refer to \cite[Section~9]{PRT}. For an immersed $2$-sphere $\iota$ with an algebraically dual sphere $g$, i.e.\ $\lambda(\iota,g)=0$, in a simply connected closed $4$-manifold $M$ the Kervaire--Milnor invariant takes values in $\Z/2$ if $\iota$ is $s$-characteristic and it lives in the trivial group if $\iota$ is not $s$-characteristic. Here, an immersed $2$-sphere $\iota$ is called $s$-characteristic, if for every other immersed $2$-sphere $\iota'$ one has $\lambda_M(\iota,\iota')\equiv \iota'\cdot\iota'\mod 2$. 

One can describe $\km(\iota)$ as follows: Assume that the algebraic self-intersection number $\mu(\iota)$ vanishes (this can always be achieved by introducing local kinks, which only change the Euler number of the normal bundle by an even number). In this case, the geometric self-intersection points of $\iota$ can be paired up in couples with canceling signs. Therefore, one can choose a \emph{framed} Whitney disc for each pair of self-intersections, and arrange that all the boundary arcs are disjoint. Then one counts the mod 2 number of intersection points of the interior of the Whitney disks with $\iota$. The number obtained in this fashion then does not depend on the particular choice of Whitney disks, and the particular choices of changing $\iota$ to have algebraic self-intersection number $0$.

Note that in general the Kervaire--Milnor invariant lives in the trivial group if $\iota$ is not $r$-characteristic, where $\iota$ is called $r$-characteristic if it is $s$-characteristic and for every immersion $\iota'\colon \RP^2\to M$ we have for the $\Z/2$-intersection form that $\iota\cdot \iota'=\iota'\cdot\iota'$. However in a simply connected $4$-manifold every immersion $\RP^2\to M$ factors up to homotopy through $\RP^2/\RP^1\simeq S^2$ and hence every $s$-characteristic sphere is $r$-characteristic.

\begin{thm}[{\cite[pp.~1310-1311]{stong}}]
	\label{thm:embeddings}
	An immersed $2$-sphere with an algebraically dual sphere is homotopic to a topologically flat embedded sphere if and only if it has trivial Kervaire--Milnor invariant. 
\end{thm}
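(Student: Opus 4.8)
The plan is to prove the two implications separately: the forward direction is essentially formal, while the converse is the substantive one and rests on the disc embedding theorem of Freedman--Quinn in the simply connected (hence ``good'') setting.

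First suppose that $\iota$ is homotopic to a topologically flat embedded sphere $S$. Since the Kervaire--Milnor invariant is an invariant of the homotopy class of $\iota$ --- a homotopy differs from a regular homotopy only by cusp moves, which change the Euler number of the normal bundle by an even amount, and the definition of $\km$ already incorporates such modifications to arrange $\mu=0$ --- we may compute $\km(\iota)=\km(S)$. But $S$ is embedded, so $\mu(S)=0$ with no geometric double points, there are no Whitney discs to choose, and the defining mod $2$ count is empty; hence $\km(\iota)=0$. (If $\iota$ is not $s$-characteristic there is nothing to prove, since then $\km$ takes values in the trivial group.)

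Conversely, suppose $\km(\iota)=0$. After a small homotopy we may assume $\iota$ is a generic immersion, and after introducing local cusps we may assume $\mu(\iota)=0$, so the geometric double points of $\iota$ come in pairs of opposite sign. Choose a framed Whitney disc $W_p$ for each such pair $p$, with the Whitney arcs pairwise disjoint; by general position the $W_p$ are immersed discs whose interiors meet $\iota$, the algebraically dual sphere $g$, and each other transversally in finitely many points, and by the description of $\km$ recalled above we have $\sum_p |\mathrm{int}(W_p)\cap\iota|\equiv\km(\iota)\equiv 0\pmod 2$. The goal is to replace the $W_p$ by pairwise disjoint, framed, \emph{embedded} Whitney discs whose interiors are disjoint from $\iota$; Whitney moves along such discs then remove all double points of $\iota$ and produce a topologically flat embedded sphere homotopic to $\iota$. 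To obtain such discs we invoke the disc embedding theorem, which applies once the family $\{W_p\}$ is algebraically transverse (all intersection and self-intersection numbers among the $W_p$ and with $\iota$ vanish) and each $W_p$ admits an algebraically dual sphere. The dual sphere $g$ supplies what is needed: using finger moves and tubing copies of $g$ into the $W_p$ along $\iota$ (the Norman trick) one removes the algebraic intersections of $\mathrm{int}(W_p)$ with $\iota$ and with the other Whitney discs, at the cost of new double points of $\iota$, which are again paired by new framed Whitney discs. A bookkeeping argument shows that the total mod $2$ interior-intersection count is preserved, hence still $0$, and this vanishing --- which is exactly the hypothesis $\km(\iota)=0$ --- is precisely what allows the leftover double points to be cancelled in pairs so that all algebraic obstructions vanish. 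The disc embedding theorem then upgrades the modified Whitney discs to disjoint flat embedded discs with the required framings and disjointness, and the Whitney moves complete the proof.

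The main obstacle is the bookkeeping in the converse direction. Each application of the Norman trick or of a finger move creates new double points of $\iota$ and new Whitney discs, and one must check that the $\Z/2$-valued count defining $\km$ is genuinely invariant under all of these moves --- this is what singles out $\km(\iota)$, rather than some a priori finer obstruction, as the exact obstruction. One must simultaneously keep track of the framing condition on the Whitney discs throughout: the disc embedding theorem yields framed embedded discs only from correctly framed input, so one has to verify that the cusp additions used to arrange $\mu(\iota)=0$, together with all the tubing operations, can be performed without destroying framings (or that any framing defect is itself corrected by a further modification that does not affect the count).
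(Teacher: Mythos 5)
A preliminary remark: the paper does not prove this statement at all --- it is quoted from Stong (pp.~1310--1311), resting on Freedman--Quinn's Theorem 10.5A --- so there is no in-paper argument to compare against, and your attempt has to be measured against the literature. Your overall architecture does match that standard proof: the forward direction is formal once well-definedness of $\km$ under homotopy is granted, and the converse is an application of the disc embedding theorem in the simply connected (good group) setting, with the dual sphere used to clean up the Whitney discs.

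As a proof, however, the converse has genuine gaps at exactly the decisive points. First, your claim that $\sum_p |\mathrm{int}(W_p)\cap\iota|\equiv\km(\iota)\equiv 0\pmod 2$ is only available when $\iota$ is $s$-characteristic; when it is not (which is precisely the case used for $(0,x)$ in the paper), $\km$ lives in the trivial group because the count is \emph{not} well defined --- for a given choice of Whitney discs it may well be odd, and one must first tube a Whitney disc into a sphere $\iota'$ with $\lambda(\iota,\iota')\not\equiv\iota'\cdot\iota'\pmod 2$ to correct the parity. Your sketch never engages with this dichotomy, which is the content of Stong's correction to Freedman--Quinn and is needed for the theorem as stated. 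Second, the disc embedding theorem needs (geometrically) dual spheres for the Whitney discs $W_p$ themselves; $g$ is dual to $\iota$, not to the $W_p$, so ``the dual sphere $g$ supplies what is needed'' is not a construction. The standard duals are the Clifford tori at the paired double points of $\iota$, capped off or tubed with parallel copies of $g$ --- this is where the hypothesis of an algebraically dual sphere actually enters, and without it the appeal to disc embedding does not go through. Third, the framing and parity issues you defer are not separable: a boundary twist corrects the framing of a Whitney disc at the cost of one interior intersection with $\iota$, so the ``bookkeeping argument'' you invoke is precisely the ledger in which framing defects, Norman tricks, and the mod $2$ count must be balanced simultaneously. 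In short: right strategy, but the steps that constitute Stong's actual proof are asserted rather than carried out.
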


\subsection*{Reproving \cref{thm:1}} 
Recall that we are considering the manifold $M=8\CP^2\#\overline{\CP^2}$ and the element $x=(1,\ldots,1,3)\in H_2(M;\Z)$. The element $x$ intersects the canonical 2-sphere representing $(1,0\dots,0) \in H_2(M;\Z)$ in a single point and hence has an algebraically dual sphere. Furthermore, $x$ is characteristic and hence $s$-characteristic. This implies that $\km(x)$ lives in $\Z/2$.

We can pick embedded spheres representing $1\in H_2(\CP^2;\Z)$ and an immersed sphere $y$ in $\overline{\CP^2}$ representing $3\in H_2(\overline{\CP^2};\Z)$. We can add local kinks to $y$ and pick Whitney disks inside $\overline{\CP^2}$ for $y$ to compute $\km(y)$. The element $x$ can be represented by taking a connected sum of the embedded spheres representing the generator in $H_2(\CP^2;\Z)$ and $y$. We can take the connected sum in $M$ avoiding the Whitney disks chosen for $y$. It follows that $\km(x)=\km(y)$. By \cite[p.~1313]{stong}, we have the formula 
\begin{equation}
\label{eq:stong}
\km(\iota)\equiv (\iota\cdot\iota-\sigma(M))/8 + KS(M) \mod 2
\end{equation}
for an $s$-characteristic sphere $\iota$ in a simply connected $4$-manifold. Thus
\[\km(y)\equiv (y\cdot y-\sigma(\overline{\CP^2}))/8+KS(\overline{\CP^2})=(-9-(-1))/8+0=-1.\]
By \cref{thm:embeddings}, $\km(x)=\km(y)=1$ implies that $x$ cannot be represented by a topologically flat embedding.

\subsection*{Reproving \cref{thm:2}}
Recall that we are considering the manifold $M'=8\CP^2\#\overline{\star\CP^2}$ and the element $x'=(1,\ldots,1,3)\in H_2(M';\Z)$. It has the same algebraically dual sphere as $x$. For $x'$ we can consider the homeomorphism $M'\cong \star\CP^2\#7\CP^2\#\overline{\CP^2}$. As for $x$, we see that $\km(x')=\km(y')+\km(y)$ where $y'$ represents the generator of $H_2(\star\CP^2;\Z)$. Using \eqref{eq:stong}, we have
\[\km(y')\equiv (y'\cdot y'-\sigma(\star \CP^2))/8+KS(\star \CP^2)= (1-1)/8+1=1.\]
Hence $\km(x')=1+1=0$. By \cref{thm:embeddings}, $x'$ can be represented by a topologically flat embedding.

\subsection*{Reproving \cref{thm:3}}
Recall that we are considering the manifold $\CP^2\#M=9\CP^2\#\overline{\CP^2}$ and the element $(0,x)=(0,1,\ldots,1,3)\in H_2(\CP^2\#M;\Z)$.
Consider the element $z:=(1,(1,0,\ldots,0))\in\CP^2\#M$. Then $\lambda(z,z)=2$ and $\lambda(z,(0,x))=1$. Hence $(0,x)$ is not $s$-characteristic. Therefore $\km((0,x))$ lives in the trivial group and hence is itself trivial. By \cref{thm:embeddings}, $(0,x)$ can be represented by a topologically flat embedding.
	
\section*{Acknowledgments}
The authors thank the Matrix Institute for hospitality during the workshop ``Topology of Manifolds: interactions between high and low dimensions'' where the question how to prove \cref{thm:1} came up during a discussion session. Most of the work on this article was done during this workshop. The authors also thank Peter Teichner for helpful discussions in particular about the Kervaire--Milnor invariant.

The first author was supported by the Deutsche Forschungsgemeinschaft (DFG, German Research Foundation) under Germany's Excellence Strategy - GZ 2047/1, Projekt-ID 390685813. The third author was supported by the SFB 1085 ``Higher Invariants'' in Regensburg.
\bibliographystyle{amsalpha}
\bibliography{mybib}
\end{document}